\theoremstyle{plain} 
\newtheorem{lemma}[equation]{Lemma} 
\newtheorem{theorem}[equation]{Theorem} 
\newtheorem{corollary}[equation]{Corollary} 
\newtheorem{question}[equation]{Question}
\newtheorem{priorResults}{Theorem}
\def\ZS{\ensuremath{\mathcal S}}
\def\ZM{\ensuremath{\mathfrak M}}
\def\MM{\ensuremath{\mathcal M}}
\def\ZB{\ensuremath{\mathfrak B}}
\def\ZI{\ensuremath{\mathbf 1}}
\def\ZR{\ensuremath{\mathbb R}}
\def\zR{\ensuremath{\mathcal R}}
\def\ZG{{\mathfrak G\,}}
\theoremstyle{plain}
\newtheorem{definition}[equation]{Definition} 
\theoremstyle{remark}
\numberwithin{equation}{section}
\newcommand {\e }[1]{\eqref{#1}}
\newcommand {\lem }[1]{Lemma \ref{#1}}
\newcommand {\cor }[1]{Corollary \ref{#1}}
\newcommand {\trm }[1]{Theorem \ref{#1}}
\newcommand {\df }[1]{Definition \ref{#1}}
\title[] {On logarithmic bounds of maximal sparse operators}
\author{Grigori A. Karagulyan}
\address{Faculty of Mathematics and Mechanics, Yerevan State
University, Alex Manoogian, 1, 0025, Yerevan, Armenia} 
\email{g.karagulyan@ysu.am}
\thanks{Research was partially supported by a grant from the Simons Foundation.  Part of this research was carried out at the American Institute of Mathematics, during a workshop on `Sparse Domination of Singular Integrals', October 2017.} 
\author{Michael T. Lacey}   
\address{ School of Mathematics, Georgia Institute of Technology, Atlanta GA 30332, USA}
\email {lacey@math.gatech.edu}
\thanks{Research supported in part by grant  from the US National Science Foundation, DMS-1600693 and the  Australian Research Council ARC DP160100153.}
\subjclass[2010]{42B20, 42B25}
\keywords{Calder\'on-Zygmund operator, sparse operator, derectional maximal function, logarithmic bound}
\begin{document}
\begin{abstract}
Given sparse collections of measurable sets  $\ZS_k$, $k=1,2,\ldots ,N$, in a general measure space $(X,\ZM,\mu)$, let $ \Lambda_{\ZS_k}$ be the sparse operator, corresponding to $\ZS_k$. We show that the maximal sparse function $ \Lambda f = \max _{1\le k\le N} \Lambda_{\ZS_k} f $ satisfies
\begin{align*}
 &\| \Lambda  \| _{L^p(X) \mapsto L^{p,\infty}(X)} \lesssim \log N\cdot \|M_{\ZS}\|_{L^p(X) \mapsto L^{p,\infty}(X)},\,1\le p<\infty,
\\ 
&\lVert \Lambda  \rVert _{L^p(X) \mapsto L^p(X)} \lesssim (\log N)^{\max\{1,1/(p-1)\}}\cdot \|M_{\ZS}\|_{L^p(X) \mapsto L^p(X)},\,  1<p<\infty, 
\end{align*}
where $M_{\ZS}$ is the maximal function corresponding to the collection of sets $\ZS=\cup_k\ZS_k$. As a consequence, one can derive norm bounds for maximal functions formed from taking measurable selections of one-dimensional Calder\'on-Zygmund operators in the plane.  Prior results of this type had a fixed choice of Calder\'on-Zygmund operator for each direction.  

\end{abstract}

	\maketitle  
\section{Introduction} 

Let $ H _v f (x) = \int_\ZR f (x-tv) \frac{dt}t$ be the Hilbert transform performed in direction $ v$ in $ \mathbb R ^2 $. 
Here and throughout we take $ v$ to be a unit vector.  Given finite set of unit vectors $V$ define the operator
\begin{equation*}
H_Vf(x)=\max_{v\in V} |H_{v}f(x)|
\end{equation*}
It is a well known consequence of the Rademacher-Menshov theorem that we have 

\begin{priorResults}\label{t:H} For any finite set of unit vectors $V$ we have 
\begin{equation*}
\bigl\lVert  H_V  \bigr\rVert_{L^2\to L^2} \lesssim \log_+ \#V . 
\end{equation*}
\end{priorResults}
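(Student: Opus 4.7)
The plan is to invoke the classical Rademacher--Menshov maximal inequality, which asserts that for any finite sequence of pairwise orthogonal functions $g_1, \ldots, g_N \in L^2$,
\begin{equation*}
\Bigl\| \max_{1 \le K \le N} \Bigl| \sum_{k=1}^K g_k \Bigr| \Bigr\|_{L^2} \lesssim (\log N) \Bigl( \sum_{k=1}^N \|g_k\|_{L^2}^2 \Bigr)^{1/2}.
\end{equation*}
To cast $H_V$ in this form, I would first order the vectors by angle as $v_1, \ldots, v_N$, using $H_v = -H_{-v}$ to assume all directions lie in a half-circle $[0,\pi)$. Then set $g_1 := H_{v_1} f$ and $g_k := H_{v_k} f - H_{v_{k-1}} f$ for $k \ge 2$, so that each $H_{v_K} f$ is the partial sum $\sum_{k \le K} g_k$, whence $H_V f = \max_K \bigl| \sum_{k \le K} g_k \bigr|$.

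The crucial input would be that the $g_k$ are pairwise orthogonal in $L^2(\mathbb{R}^2)$, with $\sum_k \|g_k\|_2^2 \lesssim \|f\|_2^2$. This is a frequency-side computation: $H_v$ is the Fourier multiplier $m_v(\xi) := -i\,\mathrm{sgn}(\xi \cdot v)$, so $g_k$ has symbol $m_{v_k} - m_{v_{k-1}}$, supported on the pair of antipodal wedges bounded by the lines $v_{k-1}^\perp$ and $v_k^\perp$. With the vectors ordered angularly on a half-circle, these wedges are pairwise disjoint and together tile the plane, and on its support each symbol takes values in $\{\pm 2i\}$. Plancherel then yields both the orthogonality of the $g_k$ and the estimate $\sum_k \|g_k\|_2^2 \le 4\|f\|_2^2$.

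Combining these ingredients with Rademacher--Menshov would immediately give the desired bound. I expect the main conceptual point to be recognizing the correct (angular) ordering — an arbitrary enumeration would produce differences $g_k$ whose frequency supports overlap and that are no longer orthogonal, destroying the input to Rademacher--Menshov. Beyond this, the remaining steps are standard: the Rademacher--Menshov inequality itself is a classical dyadic chaining argument, and the Plancherel calculation is routine.
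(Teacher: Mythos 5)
The paper offers no proof of this statement --- it is quoted as a known result, ``a well known consequence of the Rademacher--Menshov theorem'' --- and your proposal is exactly the standard argument being alluded to: angular ordering, telescoping into frequency-localized differences, Plancherel, Rademacher--Menshov. One small repair is needed in your orthogonality claim: the first term $g_1 = H_{v_1}f$ has symbol $m_{v_1}$ supported on the whole frequency plane, not on a wedge, and a short computation shows $\langle g_1, g_k\rangle$ equals a nonzero multiple of $\int_{W_k}|\hat f|^2$ in general (the product of the two odd symbols is of constant sign on the double wedge $W_k$), so $g_1$ is \emph{not} orthogonal to the later differences. The fix is immediate: bound $H_Vf \le |H_{v_1}f| + \max_K\bigl|\sum_{2\le k\le K} g_k\bigr|$, absorb $\lVert H_{v_1}f\rVert_2 \lesssim \lVert f\rVert_2$ directly, and apply Rademacher--Menshov only to $g_2,\dots,g_N$, which do have pairwise disjoint frequency supports. (Your remark that the wedges ``tile the plane'' is also slightly off --- they tile only the double sector between $v_1^\perp$ and $v_N^\perp$ --- but only disjointness is needed for $\sum_k \lVert g_k\rVert_2^2 \lesssim \lVert f\rVert_2^2$.)
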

Here and below $\#V$ denotes the cardinality of $V$ and $ \log_+ n = \max \{1, \log_2  n\}$.
Many different extensions of this result have been studied.  
One of us \cite{MR2322743} showed that the norm bound is necessarily logarithmic in $ \#V $, 
in strong contrast to the classical result on the maximal function in a lacunary set of directions of Nagel, Stein and Wainger \cite{MR0466470}.  Namely, we have 
\begin{priorResults}[\cite{MR2322743}]\label{t:GK} For any finite set $ V$ of unit vectors it holds 
\begin{equation}\label{a26}
\bigl\lVert  H_V  \bigr\rVert_{L^2\to L^2} \gtrsim \sqrt { \log_+ \#V } .  
\end{equation}
\end{priorResults}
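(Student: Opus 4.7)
The plan is to exhibit, for any $V$ with $N = \#V$, a function $f$ realizing $\|H_V f\|_{L^2} \gtrsim \sqrt{\log_+ N}\,\|f\|_{L^2}$, via a Fourier-side probabilistic construction.

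Each symbol $-i\operatorname{sgn}(\xi\cdot v)$ is constant on each of the at most $2N$ open sectors $A_1,\dots,A_M$ cut from $\mathbb{R}^2$ by the lines $\ell_v = \{\xi\cdot v=0\}$, $v\in V$. Pick an $L^2$-normalized smooth bump $\phi_k$ supported in $A_k\cap\{1\le|\xi|\le 2\}$, and set
\[
\widehat{f_\epsilon} \;=\; \sum_{k=1}^M \epsilon_k \phi_k,
\]
with independent Rademacher signs $\epsilon_k$. Then $\|f_\epsilon\|_{L^2}^2 = M \asymp N$, and for every $v\in V$
\[
H_v f_\epsilon(x) \;=\; \sum_k \epsilon_k\,\sigma_{v,k}\,\check\phi_k(x), \qquad \sigma_{v,k}\in\{\pm i\},
\]
is, pointwise in $x$, a Rademacher sum whose sign pattern $\sigma_v := (\sigma_{v,k})_k$ depends only on the position of the sectors $A_k$ relative to $\ell_v$.

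The heart of the argument is the pointwise lower bound
\[
\mathbb{E}_\epsilon \max_{v\in V}|H_v f_\epsilon(x)|^2 \;\gtrsim\; (\log_+ N)\sum_{k=1}^M |\check\phi_k(x)|^2,
\]
valid a.e.\ on a fixed large ball. Integrating in $x$ and using Plancherel yields $\mathbb{E}_\epsilon\|H_V f_\epsilon\|_{L^2}^2 \gtrsim (\log_+ N)\|f_\epsilon\|_{L^2}^2$, and a suitable realization of $\epsilon$ then gives the claimed norm estimate.

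The main obstacle is precisely this pointwise estimate: the $N$ Rademacher sums $\{H_v f_\epsilon(x)\}_{v\in V}$ have common variance $\sum_k|\check\phi_k(x)|^2$ but are highly correlated, their pairwise covariances being governed by the angular separation of the $v$'s relative to the sector partition. Extracting the full $\sqrt{\log N}$ gain at the max requires a genuine lower bound on the Hamming distance between distinct sign vectors $\sigma_v$, and this is false without some structure on $V$ (all directions clustered in a tiny arc would only give a Hamming distance $o(M)$). I would handle this via a preliminary reduction exploiting the shear/dilation invariance of $\|H_V\|_{L^2\to L^2}$ together with an angular dyadic pigeonhole: pass to a subset $V'\subseteq V$ of size $\gtrsim N$ whose directions are quantitatively well-spread inside a fixed arc, then apply a standard maximal inequality for correlated subgaussian variables to recover the required pointwise bound.
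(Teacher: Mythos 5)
This statement is the quoted prior result Theorem~\ref{t:GK}, imported from \cite{MR2322743}; the present paper contains no proof of it, so your argument can only be judged on its own terms. It has a fatal gap at exactly the step you call the heart of the argument, and the obstruction is not the one you diagnose (small Hamming separation for clustered $V$), but a structural one that no amount of spreading the directions can remove. Order the directions by angle, $\theta_1<\dots<\theta_N$. The symbols of $H_{v_i}$ and $H_{v_1}$ differ precisely on the double sector swept out between the lines $\ell_{v_1}$ and $\ell_{v_i}$, and these double sectors are \emph{nested} in $i$. Hence at each fixed $x$,
\begin{equation*}
H_{v_i}f_\epsilon(x)=H_{v_1}f_\epsilon(x)+\sum_{k\in B_i}Y_k(x),\qquad B_2\subset B_3\subset\cdots\subset B_N,
\end{equation*}
where $Y_k(x)=-2\,\sigma_{v_1,k}\,\epsilon_k\,\check\phi_k(x)$ are independent and symmetric in $\epsilon$. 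So $\{H_{v_i}f_\epsilon(x)\}_{i}$ is, up to one common term, a nested partial-sum (martingale-type) process, and L\'evy's reflection inequality (applied to real and imaginary parts) gives $\mathbb{E}_\epsilon\max_i|H_{v_i}f_\epsilon(x)|^2\lesssim\sum_k|\check\phi_k(x)|^2$, with \emph{no} logarithmic gain. Your key pointwise lower bound is therefore false for every configuration of directions: well-spread directions only make the increments of this path uniform; they do not break its one-dimensional ordering, which is all L\'evy's inequality uses. Consequently $\mathbb{E}_\epsilon\|H_Vf_\epsilon\|_{L^2}^2\lesssim\|f_\epsilon\|_{L^2}^2$, and no realization of the signs can be extracted by averaging. (Your proposed rescue also fails for an independent reason: a single linear change of variables cannot convert, say, a lacunary set of directions into a quantitatively well-spread one, and the whole point of Theorem~\ref{t:GK} --- the contrast with Nagel--Stein--Wainger stressed in the introduction --- is that it covers lacunary $V$.)

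The underlying moral is that independent random signs attached to one bump per sector can never detect the $\sqrt{\log_+\#V}$: maximal functions of martingale partial sums are bounded (Doob, L\'evy), which is precisely why Menshov-type extremal examples for maximal partial-sum operators must be constructed deterministically, with carefully arranged phases rather than random ones; the construction in \cite{MR2322743} is of that deterministic nature. For \emph{uniformly spread} $V$ there is also an elementary non-random route: test on $f=\mathbf 1_{[0,1]^2}$, observe $\max_{v\in V}|H_vf(x)|\gtrsim |x|^{-1}$ for $2\le|x|\le c\,\#V$, and integrate to get $\|H_Vf\|_{L^2}^2\gtrsim\log_+\#V$; but upgrading this to arbitrary finite $V$ is the actual content of the theorem and is not addressed by your outline.
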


The maximal function variant   in the strong and weak-type estimates was first established by Nets Katz \cites{MR1711029,MR1681088}.    Namely, set 
\begin{equation*}
M_v f (x) = \sup _{t>0} (2t) ^{-1} \int _{-t} ^{t} \lvert  f (x-tv)\rvert\;dt ,  
\end{equation*}
for unit vectors $ v$, and for a finite set of unit directions $ V$, let $ M_V f = \max _{v\in V} M_v f $. 

\begin{priorResults}[\cites{MR1711029,MR1681088}]\label{t:katz} For any set of unit vectors $ V$, we have 
\begin{equation} \label{e:nets}
\lVert M_V\rVert _{L^2\to L^{2,\infty}} \lesssim \sqrt {\log_+ \# V }, 
\qquad 
\lVert M_V\rVert _{L^2\to L^2} \lesssim  {\log_+ \# V  }.  
\end{equation}
\end{priorResults}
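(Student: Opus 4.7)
My plan is to reduce the weak-type bound to a geometric lemma about collections of rectangles with at most $N = \#V$ directions, and then to derive the strong-type bound by interpolation. Fix a nonnegative $f \in L^2(\mathbb{R}^2)$ and $\lambda > 0$, and set $E_\lambda := \{M_V f > \lambda\}$. For each $x \in E_\lambda$, select a rectangle $R(x)$ containing $x$ whose long axis points in a direction of $V$ and satisfies $|R(x)|^{-1}\int_{R(x)} f > \lambda$. A Córdoba--Fefferman-type extraction procedure produces a subcollection $\{R_j\}$ with $|E_\lambda| \lesssim \sum_j |R_j|$ and with the property that rectangles sharing any given direction are essentially pairwise disjoint.

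The crux is the geometric estimate
\begin{equation*}
\sum_{i,j} |R_i \cap R_j| \lesssim \log_+ N \cdot \sum_j |R_j|
\end{equation*}
for any such selected collection. I would prove this by decomposing pairs $(R_i, R_j)$ according to the dyadic scale of the angle $\angle(v_i, v_j)$ between their directions. For each angle scale $2^{-k}$, a direct geometric estimate bounds $|R_i \cap R_j|$ in terms of the short sides and $2^{-k}$, and the Vitali-type disjointness of rectangles of equal direction permits summation over pairs to yield a bound of $O(\sum_j |R_j|)$ at each scale. Summing over the $O(\log_+ N)$ relevant angle scales produces the logarithmic factor.

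Granting the geometric estimate, Cauchy--Schwarz gives
\begin{equation*}
\lambda \sum_j |R_j| \le \sum_j \int_{R_j} f \le \|f\|_2 \Bigl\| \sum_j \chi_{R_j} \Bigr\|_{L^2} \lesssim \|f\|_2 \sqrt{\log_+ N}\, \Bigl(\sum_j |R_j|\Bigr)^{1/2},
\end{equation*}
which rearranges to the claimed weak-type bound $\|M_V\|_{L^2 \to L^{2,\infty}} \lesssim \sqrt{\log_+ N}$. For the strong $L^2$ bound, I would run the same selection/geometric argument at a slightly lower integrability to obtain a weak $(p,p)$ bound for some $p < 2$ with an analogous logarithmic dependence, and then apply Marcinkiewicz interpolation between this weak $(p,p)$ estimate and the trivial $L^\infty \to L^\infty$ bound of $M_V$ to recover the strong $(2,2)$ bound with constant $\log_+ N$.

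The main obstacle is the geometric estimate: a naive Córdoba--Fefferman approach produces only the inferior $(\log N)^2$ factor, so the improvement to a single $\log N$ requires exploiting both the angular dyadic decomposition and the Vitali well-distribution in a coordinated fashion. This combinatorial/geometric step is the technical heart of Katz's argument and the place where the special two-dimensional structure of the problem is used in an essential way.
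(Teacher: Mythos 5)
This is Theorem~\ref{t:katz}, a prior result quoted from Katz's two 1999 papers; the present paper contains no proof of it, so your proposal cannot be compared to an internal argument and must stand on its own as a proof of Katz's theorem. Read that way, it has a genuine gap at precisely the step you flag as the crux. The covering estimate
\begin{equation*}
\sum_{i,j}\lvert R_i\cap R_j\rvert \;\lesssim\; \log_+ N\cdot \sum_j \lvert R_j\rvert
\end{equation*}
does not follow from ``a direct geometric estimate at each dyadic angle scale plus Vitali disjointness within each direction.'' Fix $R_j$ and an angular band $\angle(v_i,v_j)\sim 2^{-k}$: that single band may contain on the order of $N$ distinct directions of $V$, and disjointness of the rectangles sharing one fixed direction $v$ only yields $\sum_{i:\,v_i=v}\lvert R_i\cap R_j\rvert\le \lvert R_j\rvert$ for each such $v$ separately, so summing over the directions in the band can produce $N\lvert R_j\rvert$ rather than $O(\lvert R_j\rvert)$. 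The pointwise bound $\lvert R_i\cap R_j\rvert\lesssim w_iw_j/\sin\theta$ does not repair this, since many disjoint thin rectangles in the same direction can each clip $R_j$. By the C\'ordoba--Fefferman equivalence between covering properties and weak-type bounds, the displayed estimate (for a suitable selection) is essentially the theorem itself in disguise; Katz's actual arguments are not of this naive covering type but pass to a dyadic model and reduce the bound to a combinatorial statement about trees of directions, proved by induction on scales. Your closing paragraph concedes that this step is ``the technical heart of Katz's argument,'' which is accurate, but it means the proposal is a reduction of the theorem to an unproved lemma, not a proof.

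The strong-type half has a secondary gap of the same nature: Marcinkiewicz interpolation between a weak $(p,p)$ bound with constant $A_p$ ($p<2$) and the trivial $L^\infty$ bound gives an $L^2$ operator norm of order $A_p^{p/2}$, so to recover $\lVert M_V\rVert_{L^2\to L^2}\lesssim \log_+ N$ you would need $A_p\sim(\log_+ N)^{2/p}$, and no mechanism is offered for producing a weak $(p,p)$ estimate with that constant; the covering argument below $L^2$ requires controlling $\lVert\sum_j\chi_{R_j}\rVert_{p'}$ for $p'>2$, which is strictly harder than the $p'=2$ case you have not yet established.
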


Many extensions of these results have been considered, and we will cite several of these extensions. 
Herein, we prove results, which allow for much rougher examples than  singular integrals in a choice of directions. 
Let $K_a(x)$, $a\in \ZR$, be a family of Calder\'on-Zygmund kernels with uniformly bounded Fourier transforms, $\|\hat K_a \|_\infty<M$, such that $K_a(x)$ as a function in two variables $a$ and $x$ is measurable on $\ZR^2$. For a unit vector $v$ in $\ZR^2$ with a perpendicular vector $v ^{\perp}$ we consider an operator $T_v$ written by
\begin{equation}
T_vf(x)=\int_{\ZR} K_{x\cdot v^\perp}(t)f(x-tv)dt,\quad x\in \ZR^2,
\end{equation}
for compactly supported smooth functions $f$ on $\ZR^2$. Notice that on the $v$-directer lines $x\cdot v^\perp=l$  the operator $T_v$ defines one dimensional Calder\'on-Zygmund operators, and those can be different as the line varies. For a finite collection of unit vectors $V$ denote
\begin{equation}
T_Vf(x)=\max_{v\in V}|T_vf(x)|.
\end{equation}
Among the others below, as a corollary to our main result we derive the following.  

\begin{corollary}\label{c:H} If the family of Calder\'on-Zygmund kernels $K_a(x)$ satisfies the above  conditions, then for any finite collection of unit vectors $V$, we have 
\begin{align*}
&\lVert T_V  \rVert _{ L^2\to L^{2,\infty}}  \lesssim( \log_+ \lvert  V\rvert) ^{3/2} , 
\\
&\lVert T_V  \rVert _{ L^2\to L^{2} } \lesssim (\log_+ \lvert  V\rvert)^2. 
\end{align*}
\end{corollary}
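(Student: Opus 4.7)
The plan is to reduce \cor{c:H} to the main theorem of the paper (stated in the abstract) via a uniform sparse domination of the operators $T_v$, and then to bound the resulting sparse maximal operator $M_\ZS$ by means of the Katz estimate \otrm{t:katz}. First, for each fixed $v\in V$ I would establish a pointwise sparse domination $|T_v f(x)| \lesssim \Lambda_{\ZS_v} f(x)$, where $\ZS_v$ is a sparse collection of rectangles whose long sides are parallel to $v$, and the sparse constant is independent of $v$. On each line $\ell$ parallel to $v$, the operator $T_v|_\ell$ is a one-dimensional Calder\'on-Zygmund convolution whose multiplier has sup norm at most $M$; consequently the maximal truncation of $T_v|_\ell$ satisfies the usual weak-$(1,1)$ and $L^p$ bounds with constants independent of $\ell$. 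A Lerner-type sparse decomposition applied fiberwise, followed by the standard thickening to a tube of uniformly bounded eccentricity around each sparse interval, yields the 2D sparse family $\ZS_v$.

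With these pointwise bounds in hand, set $\ZS=\bigcup_{v\in V}\ZS_v$ and $N=\#V$, so that $T_V f \lesssim \max_{v\in V}\Lambda_{\ZS_v} f$. Applying the main theorem at $p=2$, where $\max\{1,1/(p-1)\}=1$, yields
\begin{align*}
\|T_V\|_{L^2\to L^{2,\infty}} &\lesssim \log_+\#V \cdot \|M_\ZS\|_{L^2\to L^{2,\infty}},\\
\|T_V\|_{L^2\to L^{2}} &\lesssim \log_+\#V \cdot \|M_\ZS\|_{L^2\to L^{2}}.
\end{align*}
Because $\ZS$ consists of rectangles of bounded eccentricity oriented along the directions in $V$, the maximal operator $M_\ZS$ is pointwise dominated by the composition of a one-dimensional Hardy--Littlewood maximal function along a short side with the Katz directional maximal function $M_V$. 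Invoking \otrm{t:katz} gives $\|M_\ZS\|_{L^2\to L^{2,\infty}} \lesssim \sqrt{\log_+\#V}$ and $\|M_\ZS\|_{L^2\to L^{2}} \lesssim \log_+\#V$, and combining with the previous display produces the stated $(\log_+\#V)^{3/2}$ and $(\log_+\#V)^{2}$ bounds.

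The main technical obstacle is the first step: producing the sparse domination for $T_v$ with a sparse constant and geometric structure that are \emph{uniform in $v$}. Although $T_v$ restricts to a genuine one-dimensional Calder\'on-Zygmund operator on each $v$-line, the kernel $K_a$ depends only measurably on the line parameter $a$, so one must verify that the fiberwise sparse intervals can be assembled into a legitimate 2D sparse family of measurable rectangles of controlled eccentricity, and that the implicit constant does not deteriorate with $v$. Once this assembly is accomplished, the Katz bound and the main theorem combine mechanically to yield the claimed logarithmic estimates.
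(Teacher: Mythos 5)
Your proposal is correct in outline and follows essentially the same route as the paper: fiberwise one-dimensional sparse domination of each $T_v$ (uniform in the line thanks to $\|\hat K_a\|_\infty<M$), assembly of the resulting intervals into sparse collections of rectangles oriented along each $v\in V$, and then \trm{t:main} at $p=2$ together with Katz's bounds (\otrm{t:katz}) for the directional maximal function dominating $\MM_{\ZS}$ --- this is exactly the content of \cor{C3} and the discretization pictured in Figure 1. The one phrase to correct is ``tube of uniformly bounded eccentricity'': the thickened rectangles must be long and thin (the paper normalizes their $v^{\perp}$-width to one), since an essentially square thickening would neither dominate the one-dimensional averages along the central line pointwise nor leave any role for Katz's theorem; your subsequent appeal to $M_V$ shows you intend the eccentric rectangles of $\mathcal R_v$, so this is a slip of terminology rather than of substance.
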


No prior result we are aware of has permitted a variable choice of operator, as the line varies.  
The method of proof is by way of \emph{sparse operators}. 
Namely we use the recent pointwise domination of singular integrals by a positive operator \cites{MR3521084,150805639,MR3625108} to reduce the corollary above to a setting, where the operators are \emph{positive}. 
These positive operators, called sparse operators are `bigger than the maximal function by logarithmic terms', and so the proofs of the sparse operator bounds imply the corollary above.

\section{Sparse Operators} 
Let $(X,\ZM,\mu)$ be a measure space. Given collection of measurable sets $\ZB\subset \ZM$ defines the maximal function
\begin{equation*}
\MM_\ZB f(x)=\sup_{B\in \ZB}\langle f\rangle_B\cdot \ZI_B(x),
\end{equation*}
where $ \langle f \rangle_B = \mu  (B)^{-1} \int _{B} \lvert  f\rvert  $. 
By a \emph{sparse operator} we mean an operator 
\begin{equation*}
\Lambda_{\ZS} f(x) = \sum_{S\in \mathcal S} \langle f \rangle_S \mathbf 1_{S}(x), 
\end{equation*}
where $ \mathcal S\subset \ZM$ is a sparse collection of measurable sets, that means there is a constant $0<\gamma<1$ so that any set $S\in\ZS$ has a portion $ E_S\subset S$ with $ \mu(E_S) \geq   \gamma \mu(S)$ and those are pairwise disjoint.   

Without recalling the exact definition of a bounded Calder\'on-Zygmund operator, the main result we need from \cites{MR3521084,150805639,MR3625108}  is this. 

\begin{priorResults}\label{t:sparse} For any bounded Calder\'on-Zygmund operator $ T$, and compactly supported function $f$ on $\mathbb R^n $, there is a sparse collection $\ZS=\ZS_{T,f}$ of $n$-dimensional balls so that  
\begin{equation*}
\lvert  T f (x)\rvert \lesssim \Lambda _{\mathcal S} f  (x).   
\end{equation*}

\end{priorResults}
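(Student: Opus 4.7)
The plan is to follow the by-now-standard stopping-time approach, in which a grand maximal truncation operator captures the non-local part of $Tf$ cube-by-cube.

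First I would reduce matters to the case where $f$ is supported in a single cube $Q_0$, and introduce the grand maximal truncation
\begin{equation*}
M_T^\# g(x) = \sup_{Q \ni x} \sup_{\xi,\eta \in Q} \bigl| T(g \ZI_{\ZR^n \setminus 3Q})(\xi) - T(g \ZI_{\ZR^n \setminus 3Q})(\eta) \bigr|,
\end{equation*}
where the outer supremum is over all cubes $Q$ containing $x$. Using the standard size and smoothness estimates on a Calder\'on-Zygmund kernel, one checks the pointwise bound $M_T^\# g(x) \lesssim Mg(x) + T_* g(x)$, where $M$ is the Hardy--Littlewood maximal function and $T_*$ is the maximal truncated singular integral; since both are of weak type $(1,1)$, so is $M_T^\#$.

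The second, main step is the stopping-time construction of the sparse family. Given $Q_0$, I would select the maximal dyadic subcubes $\{Q_j\} \subset Q_0$ on which either
\begin{equation*}
\langle |f| \rangle_{Q_j} > A \langle |f| \rangle_{Q_0} \quad \text{or} \quad \langle M_T^\# f \rangle_{Q_j} > A \langle M_T^\# f \rangle_{Q_0},
\end{equation*}
with $A$ an absolute constant chosen large enough that the weak $(1,1)$ bounds for $M$ and $M_T^\#$ force $\sum_j |Q_j| \le \tfrac12 |Q_0|$. Then $E_{Q_0} := Q_0 \setminus \bigcup_j Q_j$ has measure at least $\tfrac12 |Q_0|$, and combining the definition of $M_T^\#$ with a direct Lebesgue-point-type evaluation of $T(f\ZI_{3Q_0})$ at a fixed reference point in $Q_0$ yields the pointwise estimate $|Tf(x)| \lesssim \langle |f|\rangle_{Q_0}$ for $x \in E_{Q_0}$.

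Finally I would iterate the construction on each $Q_j$ with $f$ replaced by $f \ZI_{Q_j}$; the contribution of $f$ from outside $Q_j$ has already been absorbed at the previous level through the $M_T^\#$ stopping condition. Taking $\ZS$ to be the union of all selected cubes across every generation gives a sparse family with sparsity constant $\gamma = \tfrac12$ and pairwise disjoint portions $E_Q$, and summing the one-level bound $|Tf|\ZI_{E_Q} \lesssim \langle |f|\rangle_Q \ZI_{E_Q}$ over $Q \in \ZS$ produces the claimed pointwise domination $|Tf(x)| \lesssim \Lambda_\ZS f(x)$. The main obstacle is cleanly establishing the weak $(1,1)$ bound for $M_T^\#$ in a form strong enough for the stopping selection to produce a geometrically decreasing tower; this is exactly where the Calder\'on-Zygmund structure of $T$ enters, through a careful comparison of the truncation difference inside $M_T^\#$ with $T_*$ via the smoothness estimate on the kernel.
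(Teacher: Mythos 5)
The paper gives no proof of Theorem~D --- it is a prior result quoted from the cited references (Conde-Alonso--Rey, Lerner, Lacey) --- and your sketch is essentially the standard stopping-time argument with a grand maximal truncation operator that those references use, so in outline it is correct and matches the intended source. The only details to watch are that the oscillation form of $M_T^{\#}$ must indeed be supplemented by evaluating $T(f\ZI_{(3Q)^c})$ at a reference point at each stage of the recursion (as you indicate), and that the dyadic cubes your construction produces should be exchanged for circumscribed balls to match the statement as written.
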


This inequality contains many deep results about Calder\'on-Zygmund operators, for which we refer the reader to the referenced papers.   Sparse bounds hold for other functionals of Calder\'on-Zygmund operators, like variational estimates \cite{160405506}. 
The result above  has been extended in a number of interesting ways.
Among many we could point to, the reader can consult \cites{161103808,2016arXiv161209201C,160305317,MR3531367}.  

\begin{definition}\label{d1}
	Let $(X,\ZM,\mu)$ be a measure space. A family of measurable sets $\ZB\subset \ZM$ is said to be martingale collection if for any two elements $A,B\in \ZB$ we have either 
	\begin{equation*}
	A\subset B,\quad B\subset A \text { or } A\cap B=\varnothing.
	\end{equation*} 
	We say that $\ZB$ is a finite-martingale collection if there are finite number of martingale collections 
	\begin{equation}\label{a13}
	\ZB_1,\ldots,\ZB_d
	\end{equation}
	such that for any $B\in \ZB$ there is a set $B'\in \cup_k\ZB_k$ with
	\begin{equation*}
	B\subset B',\quad \mu(B')\le C\mu(B).
	\end{equation*}
\end{definition}
It is well known that any family of balls in $\ZR^n$ forms a finite-martingale collection. Moreover, the corresponding martingale collections \e {a13} can be taken to be dyadic grids. Such dyadization is a key point in many applications of sparse operators.  

We turn to the statement of the main theorem. Let  $\ZS_k$, $k=1,2,\ldots,N$ be a finite-martingale sparse collections in a measure space $(X,\ZM,\mu)$, and suppose $\ZS=\cup_{k=1}^N\ZS_k$. The family $\ZG=\{\ZS_1,\ldots,\ZS_N\}$ defines the operator
\begin{equation}\label{a25}
\Lambda_\ZG f(x)=\max_{1\le k\le N}\Lambda _{\ZS_k}f(x).
\end{equation}

\begin{theorem}\label{t:main} With the notations above we have 
the inequalities 
\begin{align}
&\lVert \Lambda_\ZG  \rVert _{L^p \to L^{p, \infty} } \lesssim  \log_+ N \cdot \lVert \MM _{\ZS}\rVert _{L^p\to L^{p,\infty}},\,1\le p<\infty, 
\label{e:main1}\\ 
&\lVert \Lambda_\ZG  \rVert _{L^p \to L^{p} } \lesssim  (\log_+ N)^{\max\{1,1/(p-1)\}} \cdot \lVert \MM _{\ZS}\rVert _{L^p\to L^{p}},\,1<p<\infty.\label{e:main2}  
\end{align}
\end{theorem}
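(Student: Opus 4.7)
My plan is to prove the weak-type bound \eqref{e:main1} first, then deduce the strong-type bound \eqref{e:main2}. Before starting, I would reduce to the case in which each $\ZS_k$ is a genuine martingale collection: by Definition \ref{d1}, each $\ZS_k$ is dominated by a fixed number of martingale sub-collections with uniformly bounded measure ratio, so distributing the maximum in \eqref{a25} across these sub-collections costs only an absolute constant.

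For \eqref{e:main1}, I would fix $\lambda > 0$ and peel off the large-average part by setting
\begin{equation*}
F_\lambda = \Bigl\{\MM_\ZS f > \tfrac{\lambda}{C \log N}\Bigr\}.
\end{equation*}
The weak-type hypothesis on $\MM_\ZS$ gives $\mu(F_\lambda) \le (C\log N/\lambda)^p \lVert\MM_\ZS\rVert_{L^p\to L^{p,\infty}}^p \lVert f\rVert_p^p$, which is already of the desired form, so the task reduces to estimating $\mu(\{\Lambda_\ZG f>\lambda\}\setminus F_\lambda)$. Outside $F_\lambda$, every $S\in\ZS_k$ containing a given $x$ has $\langle f\rangle_S\le\lambda/(C\log N)$, and sparseness forces the nested chain of such $S$ through $x$ to have geometrically decreasing measures. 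I would then bin each such chain by the dyadic scale of $\langle f\rangle_S$, so that only about $\log N$ scales can jointly contribute more than $\lambda$, and apply at each scale a sparse Carleson-embedding estimate rooted at the maximal set realising that scale. A Rademacher--Menshov--type argument on these blocks should then convert the naive factor $N$ from a crude union bound into the claimed $\log N$.

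For \eqref{e:main2}, the case $p\ge 2$ (where $\max\{1,1/(p-1)\}=1$) would follow from \eqref{e:main1} by Marcinkiewicz interpolation between two nearby weak endpoints, both carrying a constant of order $\log N$. For $1<p<2$ the worse exponent $1/(p-1)$ comes from a Kolmogorov-type passage from weak to strong: truncating $f$ and applying the weak bound to each piece picks up an extra factor of $(p')^{1/p}$, and balancing this factor against the $\log N$ in the weak constant produces exactly $(\log N)^{1/(p-1)}$.

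The principal obstacle is executing the combinatorial step that upgrades the union bound from $N$ to $\log N$: I must show that at each $x\notin F_\lambda$ only about $\log N$ dyadic scales of $\langle f\rangle_S$ can give a total above $\lambda$, and that at each such scale the contribution to $\Lambda_{\ZS_k}f(x)$ is controlled uniformly in $k$ by $\MM_\ZS f(x)$. Balancing the $N$ choices of $k$ against the length of each nested chain, while respecting the martingale structure of each $\ZS_k$ separately, is the delicate heart of the argument.
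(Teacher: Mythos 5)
Your outline for \eqref{e:main1} has the right skeleton --- peel off $\{\MM_\ZS f>\lambda/(C\log N)\}$ using the weak-type hypothesis, then bin the remaining sets by the dyadic size of $\langle f\rangle_S$ so that exceeding height $\lambda$ forces the counting function $\sum_{R\in\ZS_{k,s}}\mathbf 1_R$ to be large at some scale $s$ --- but the step that actually beats the union bound over $k$ is missing, and the tools you name for it are not the right ones. The paper's mechanism is the exponential distributional estimate \eqref{e:exp}: for a martingale sparse collection, $\mu\{\sum_{S\subset R_0}\mathbf 1_S>t\}\lesssim \gamma^{t}\mu(R_0)$, because the $j$-th generation occupies at most a $\gamma^j$ fraction of its root. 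Once the threshold has been divided by $\log N$, exceeding $\lambda$ at scale $s$ forces the counting function above $c\,2^{s/2}\log N/\delta$, whose superlevel set has measure $\lesssim N^{-1}2^{-s}$ times the measure of $\bigcup_{R\in\ZS_{k,s}}R$ for $\delta$ small; summing the $N$ crude union-bound terms is then harmless. Neither a ``sparse Carleson embedding'' nor a Rademacher--Menshov argument supplies this: the operators $\Lambda_{\ZS_k}$ are positive, there is no orthogonality to exploit, and without the geometric decay of the counting function your plan does not close.

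For \eqref{e:main2} the proposed route cannot prove the statement as written. Marcinkiewicz interpolation between weak endpoints at $p\pm\epsilon$ bounds $\lVert\Lambda_\ZG\rVert_{L^p\to L^p}$ by $\log N$ times the \emph{weak} norms of $\MM_\ZS$ at the exponents $p\pm\epsilon$, whereas \eqref{e:main2} bounds it by the \emph{strong} norm of $\MM_\ZS$ at the same exponent $p$; in the intended applications (e.g.\ Corollary \ref{C3}) the norm of $\MM_\ZS$ itself grows with $N$ and degrades as the exponent decreases, so the interpolated constant is genuinely different from the claimed one. Likewise there is no Kolmogorov-type passage from weak $L^p$ to strong $L^p$ at a single exponent, and the factor $(\log N)^{1/(p-1)}$ cannot be produced by balancing interpolation constants. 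The paper instead proves \eqref{e:main2} directly: it linearizes the maximum, expands $\lVert\Lambda_\ZG f\rVert_p^p$ by pairing each $R$ with the sets in its $j$-th generation (in both orders), and controls the $j$-th term by $\min\{\lVert\MM_\ZS\rVert_p^\alpha, N2^{-cj}\}\lVert\Lambda_\ZG\rVert_p^{p-\alpha}$ with $\alpha=\min\{1,p-1\}$, using \lem{L1} (an $L^p$ bound of order $\delta^{1/p}$ for a sparse operator whose indicators sit on subsets of relative measure $\delta$). Summing over $j$ and absorbing $\lVert\Lambda_\ZG\rVert_p^{p-\alpha}$ gives $\lVert\Lambda_\ZG\rVert_p^\alpha\lesssim\log N\cdot\lVert\MM_\ZS\rVert_p^\alpha$, which is where the exponent $\max\{1,1/(p-1)\}$ comes from. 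You would need to supply an argument of this kind (or another self-improving scheme keeping $\MM_\ZS$ at the exponent $p$) rather than interpolate.
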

Here and below the notation $a\lesssim b$ will stand for the inequality $a\le c \cdot b$, where the constant $c>0$ may depend only on $p$ and on the constants from the above definitions of different type of set collections. As we said, a sparse operator is logarithmically larger than a maximal function, as indicated after \cor {c:H}. Our inequalities above match this heuristic. In fact, \cor {c:H} as well as \cor {C3} below may be analogously formulated in $\ZR^n$ for any $n\ge 2$, taking instead of parallel lines parallel hyperplanes of dimension $m< n$ and consider different $m$-dimensional Calder\'on-Zygmund operator on each hyperplane.

For a direction $ v$, and a smooth compactly supported function $ f$, we let 
\begin{equation*}
\ZS _{v} f (x) =  \Lambda _{\mathcal S (v _{\perp} \cdot x)} f (x),  
\end{equation*}
where $ v _{\perp} $ is orthogonal to $ v$, and $ y \mapsto \Lambda _{\mathcal S (y)}$ is a measurable choice of sparse operators.  
Given a finite set of unit vectors $ V$, we set $ \ZS _{V} f = \max _{v\in V} \ZS_v f $.  
 
\begin{corollary}\label{C3}
	With the notation above, for any finite set of unit vectors $V$ we have the inequalities
	\begin{align}
	&\|\ZS_V\|_{L^2\to L^{2,\infty}}\lesssim (\log_+ V)^{3/2},\label{a23}\\
	&\|\ZS_V\|_{L^2\to L^{2}}\lesssim (\log_+ V)^{2},\label{a21}\\
	&\|\ZS_V\|_{L^p\to L^p}\lesssim (\log_+ V)^{1+1/p},\quad p> 2.\label{a22}
	\end{align}
\end{corollary}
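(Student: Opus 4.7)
The plan is to realize the directional maximal sparse operator $\ZS_V$ as an operator of the form $\Lambda_\ZG$ in $\mathbb R^2$ and combine \trm{t:main} with Katz's maximal theorem \trm{t:katz}.

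\textbf{Step 1 (2D lifting).} Fix $v_k \in V$ and coordinates $(y,z)=(v_k^\perp\cdot x,\, v_k\cdot x)$. I discretize the $y$-parameter on a grid of spacing $\epsilon>0$ and thicken each interval $I \in \mathcal S(y_j)$ into a rectangle $R_I = I\times [y_j,y_j+\epsilon]\subset \mathbb R^2$. The resulting collection $\ZS_k^\epsilon$ is sparse (major subsets on distinct thickened lines are automatically disjoint; on a single strip the 1D sparse structure is preserved) and finite-martingale (the 1D dyadic structure on each strip combines with the disjointness of the strips). By Lebesgue differentiation in the $y$-direction, $\Lambda_{\ZS_k^\epsilon}f \to \ZS_{v_k}f$ pointwise a.e.\ as $\epsilon \to 0$, uniformly in $k$.

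\textbf{Step 2 (Katz maximal bound).} The union $\ZS^\epsilon=\bigcup_k \ZS_k^\epsilon$ consists of rectangles whose long sides lie in at most $N=\#V$ directions. The maximal operator $\MM_{\ZS^\epsilon}$ is pointwise dominated, uniformly in $\epsilon$, by the tube-valued version of the directional maximal function, which inherits the Katz bounds of \trm{t:katz} (by composing with a 1D Hardy--Littlewood maximal function across the thickening direction). Thus
\begin{equation*}
\|\MM_{\ZS^\epsilon}\|_{L^2\to L^{2,\infty}}\lesssim \sqrt{\log_+ N},\qquad \|\MM_{\ZS^\epsilon}\|_{L^2\to L^2}\lesssim \log_+ N,
\end{equation*}
and interpolating the weak-$L^2$ bound with $L^\infty\to L^\infty$ yields $\|\MM_{\ZS^\epsilon}\|_{L^p\to L^p}\lesssim (\log_+ N)^{1/p}$ for $p>2$.

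\textbf{Step 3 (conclusion).} For $p\ge 2$ we have $\max\{1,1/(p-1)\}=1$, so substituting the Step~2 bounds into \trm{t:main} gives, uniformly in $\epsilon$,
\begin{align*}
\|\Lambda_{\ZG^\epsilon}\|_{L^2\to L^{2,\infty}}&\lesssim \log_+ N\cdot\sqrt{\log_+ N}=(\log_+ N)^{3/2},\\
\|\Lambda_{\ZG^\epsilon}\|_{L^2\to L^2}&\lesssim \log_+ N\cdot \log_+ N=(\log_+ N)^2,\\
\|\Lambda_{\ZG^\epsilon}\|_{L^p\to L^p}&\lesssim \log_+ N\cdot (\log_+ N)^{1/p}=(\log_+ N)^{1+1/p},\quad p>2,
\end{align*}
where $\ZG^\epsilon=\{\ZS_1^\epsilon,\ldots,\ZS_N^\epsilon\}$. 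Sending $\epsilon\to 0$ and invoking Step~1 together with Fatou's lemma on norms yields \e{a23}--\e{a22}. The main technical obstacle is the bookkeeping in Step~1: the measurable dependence $y\mapsto \mathcal S(y)$ must be exploited to ensure that $\ZS_k^\epsilon$ is a finite-martingale sparse collection with constants independent of $\epsilon$, and the uniform comparison of $\MM_{\ZS^\epsilon}$ with the 1D directional maximal function $M_V$ must be carried out without a loss as $\epsilon\to 0$. Granting these measure-theoretic points, the result is a clean combination of \trm{t:main} with \trm{t:katz}.
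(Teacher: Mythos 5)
Your proposal follows essentially the same route as the paper: reduce to a discrete family of sparse collections of rectangles oriented along the directions of $V$, observe that $\MM_{\ZS}$ is dominated by the directional maximal function $M_V$ so that Katz's bounds \e{e:nets} (and Marcinkiewicz interpolation with $L^\infty$ for $p>2$) control $\lVert \MM_{\ZS}\rVert$, and then substitute into \trm{t:main}. The paper is even terser about the discretization step than you are (it fixes rectangles of unit width in the $v^\perp$ direction and invokes ``standard arguments''), so your extra care with the $\epsilon$-thickening and the limit is a harmless elaboration of the same argument.
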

\cor {C3} immediately follows from \trm {t:main}. Indeed, there is no need to consider the measurable choice of sparse operators directly. By standard arguments, it suffices to consider a simplified discrete situation described here. For any pair of orthogonal vectors $ (v, v ^{\perp})$, let 
$ \mathcal R_v$ be the collection of dyadic rectangles in the plane, in the coordinates $ (v , v ^{\perp})$, whose lengths in the direction $ v ^{\perp}$ is one (see Fig. 1). 
\begin{figure}
	\begin{tikzpicture}[rotate=15] 
	\draw[xstep=.5cm, ystep=2cm] (-1.1,-1.1) grid (6.1,3.1);  
	\draw[thick,->] (0,0) -- (0,.5) node [left] {$ v$}; \draw[thick,->] (0,0) -- (.5,0) node [above] {$ v ^{\perp}$};
	\end{tikzpicture}
	\caption{The rectangles in $ \mathcal R_v$.}
\end{figure}
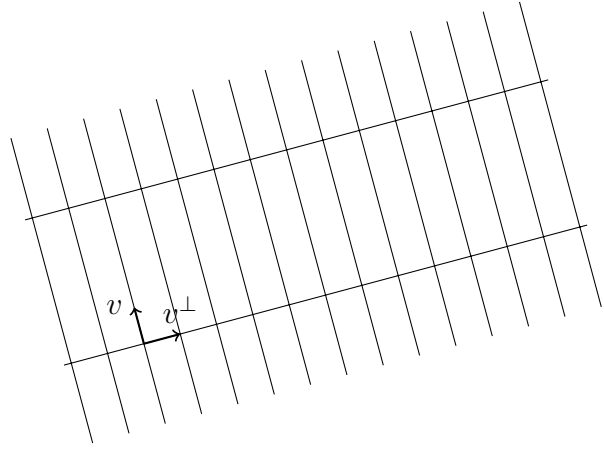
Let $ V=\{v_1,\ldots, v_N\}$ be a finite collection of unit vectors and $ \ZS _ k \subset \zR_{v_k}$, $k=1,2,\ldots, N$, be a sparse collections of rectangles. One can easily see that the operator \e {a25} generated by those collections is a discrete version of $\ZS_V$ from \e {a23}, \e {a21} and \e {a22}. On the other hand for the maximal function $\MM _{\ZS} f$ corresponding to the family of sets $\ZS=\cup_k\ZS_k$ we have the bound
\begin{equation} \label{e:MV}
\MM _{\ZS} f \le M_Vf=\max _{v\in V} \MM_{\zR_v} f ,
\end{equation}
so it satisfies to apply inequalities \eqref{e:main1} and \eqref{e:main2} combined with estimates \e {a26}. For \e {a22} we will additionally need the bound
\begin{equation*}
\|\MM _{\ZS}\|_{L^p\to L^p}\lesssim (\log_+ V)^{1/p},\quad p>2,
\end{equation*}
which is obtained from \e {a26} by the Marcinkiewicz interpolation theorem.

In light of the pointwise sparse bound in Theorem~\ref{t:sparse}, one can easily see that Corollary~\ref{c:H} in turn follows from \e{a23} and \e {a21}.  

 Since the maximal function corresponding to the $n$-dimensional canonical rectangles (with sides parallel to axes) in $\ZR^n$ is bounded on $L^p(\ZR^n)$, $1<p\le \infty$, applying the Marcinkiewicz interpolation theorem, from \e{e:main1} we can immediately deduce the following result.
\begin{corollary}\label{C2}
	If $\ZS_k$, $k=1,2,\ldots, N$, are sparse collections of canonical rectangles in $\ZR^n$, then for the maximal sparse operator \e {a25} it holds the inequality
\begin{equation}\label{a24}
\|\Lambda_\ZG\|_{L^p\to L^p}\lesssim \log_+ N,\quad 1< p<\infty,
\end{equation}
\end{corollary}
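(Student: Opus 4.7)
The plan is to combine the weak-type inequality \e{e:main1} at two different exponents with Marcinkiewicz interpolation, exploiting the fact that the strong maximal function on $\ZR^n$ is bounded on every $L^q$ with $q>1$. First, I would note that every set in $\ZS=\cup_k\ZS_k$ is an axis-parallel rectangle, so the associated sparse-family maximal function is dominated pointwise by the classical strong maximal function $M_s$. Consequently
\begin{equation*}
\|\MM_\ZS\|_{L^q\to L^{q,\infty}} \le \|\MM_\ZS\|_{L^q\to L^q} \lesssim 1,\qquad 1<q\le\infty.
\end{equation*}

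Next, I would fix $p\in(1,\infty)$, pick auxiliary exponents $1<p_1<p<p_2<\infty$, and apply \e{e:main1} at each of $p_1$ and $p_2$. Combining with the previous display, this yields
\begin{equation*}
\|\Lambda_\ZG\|_{L^{p_i}\to L^{p_i,\infty}}\lesssim \log_+ N,\qquad i=1,2.
\end{equation*}
Since $\Lambda_\ZG$ is the pointwise maximum of finitely many positive linear operators it is sublinear, so Marcinkiewicz interpolation between these two weak-type bounds produces the strong-type bound
\begin{equation*}
\|\Lambda_\ZG\|_{L^p\to L^p}\lesssim \log_+ N,
\end{equation*}
with implicit constant depending only on $p,p_1,p_2$. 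Because both endpoint constants are of the same order $\log_+ N$, the interpolated constant retains that order, giving exactly \e{a24}.

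There is really no serious obstacle here: once \e{e:main1} is available, the only missing ingredient is the $L^q$-boundedness of $\MM_\ZS$, which is free from the classical strong maximal theorem, and passing from weak to strong type via Marcinkiewicz is standard. The reason \e{e:main2} alone does not suffice is that for $1<p<2$ it produces the inferior exponent $\max\{1,1/(p-1)\}=1/(p-1)$; interpolating two weak-type bounds avoids this loss and is the route I would take.
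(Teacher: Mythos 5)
Your argument is correct and is exactly the route the paper takes: the text preceding Corollary \ref{C2} derives it from \eqref{e:main1} by using the $L^q$-boundedness of the strong maximal function (which dominates $\MM_\ZS$ for axis-parallel rectangles) together with Marcinkiewicz interpolation between two weak-type exponents bracketing $p$. Your write-up simply makes explicit the details the paper leaves as "immediate," including the correct observation that interpolating two weak-type bounds avoids the $1/(p-1)$ loss in \eqref{e:main2}.
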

Applying the weak-$L^1$ estimate of the maximal function corresponding to $n$-dimensional balls in $\ZR^n$, from \e{e:main1} we also obtain 
\begin{corollary}\label{C4}
	If $\ZS_k$, $k=1,2,\ldots, N$, are sparse collections of balls in $\ZR^n$, then for operator \e {a25} we have
	\begin{equation}\label{a32}
	\|\Lambda_\ZG\|_{L^1\to L^{1,\infty}}\lesssim \log_+ N
	\end{equation}
\end{corollary}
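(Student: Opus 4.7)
The plan is to deduce the claim directly from the weak-type case of \trm{t:main} applied with $p=1$. First I would verify that \trm{t:main} applies: by the remark following \df{d1}, any family of balls in $\ZR^n$ is a finite-martingale collection (one can dominate each ball by a dyadic cube of comparable measure coming from finitely many shifted dyadic grids). Hence every $\ZS_k$ is a finite-martingale sparse collection and inequality \eqref{e:main1} is available for the operator $\Lambda_\ZG$ built from these collections.

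Next I would control $\MM_\ZS$ in weak $L^1$. Since $\ZS=\cup_k\ZS_k$ consists of balls in $\ZR^n$, one has the pointwise bound $\MM_\ZS f(x) \le M f(x)$, where $M$ is the classical Hardy--Littlewood maximal operator. The Vitali covering argument gives the standard weak-type $(1,1)$ estimate $\|M\|_{L^1\to L^{1,\infty}}\lesssim 1$, and therefore $\|\MM_\ZS\|_{L^1\to L^{1,\infty}}\lesssim 1$.

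Inserting this into \eqref{e:main1} with $p=1$ yields
\begin{equation*}
\|\Lambda_\ZG\|_{L^1\to L^{1,\infty}} \lesssim \log_+ N \cdot \|\MM_\ZS\|_{L^1\to L^{1,\infty}} \lesssim \log_+ N,
\end{equation*}
which is exactly \eqref{a32}. There is no real obstacle in this deduction: \trm{t:main} does all the work, and the only additional ingredient is the classical weak-$(1,1)$ bound for $M$, which is why the corollary is formulated only for collections of balls (as opposed to the collections of canonical rectangles in \cor{C2}, where the $L^1$ endpoint fails for the strong maximal function and one must interpolate instead).
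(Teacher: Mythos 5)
Your proposal is correct and follows exactly the paper's route: the paper also obtains \eqref{a32} by applying \eqref{e:main1} with $p=1$ together with the weak-$L^1$ bound for the maximal function over balls (the finite-martingale property of balls having been noted after \df{d1}). Your additional remark contrasting this with \cor{C2} matches the paper's reasoning as well.
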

Combining sparse domination \trm {t:sparse} with \cor {C2}, one can easily get
\begin{corollary}\label{C5}
	Let $T$ be a Calder\'on-Zygmund operator on $\ZR^n$. Then for any sequence of measurable functions $f_k$, $k=1,2,\ldots, N$, satisfying $|f_k(x)|\le f(x)$, $x\in \ZR^n$, it hold the inequalities
	\begin{align}
	&\left\|\sup_{1\le k\le N}Tf_k\right\|_{L^p}\lesssim \log_+N\cdot \|f\|_{L^p},\, 1< p<\infty,\label{a30}\\
	&\left\|\sup_{1\le k\le N}Tf_k\right\|_{L^{1,\infty}}\lesssim \log_+N\cdot \|f\|_{L^1}.\label{a31}
	\end{align}
\end{corollary}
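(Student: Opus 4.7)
The plan is to combine the pointwise sparse domination of Theorem \ref{t:sparse} with the maximal sparse operator bounds already established in Corollary \ref{C4} and Theorem \ref{t:main}, reducing the corollary to norm estimates for the operator $\Lambda_{\ZG}$ on sparse collections of balls.

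First, for each $k\in\{1,\ldots,N\}$, apply Theorem \ref{t:sparse} to the Calder\'on--Zygmund operator $T$ and to the function $f_k$, obtaining a sparse collection $\mathcal S_k$ of balls in $\ZR^n$ with $|Tf_k(x)| \lesssim \Lambda_{\mathcal S_k} f_k(x)$. Since $|f_k(x)|\le f(x)$ pointwise and $\Lambda_{\mathcal S_k}$ depends on its argument only through absolute values, we have $\Lambda_{\mathcal S_k} f_k(x) \le \Lambda_{\mathcal S_k} f(x)$. Taking the supremum over $k$ yields the pointwise bound
\begin{equation*}
\sup_{1\le k\le N} |Tf_k(x)| \lesssim \max_{1\le k\le N} \Lambda_{\mathcal S_k} f(x) = \Lambda_{\ZG} f(x),
\end{equation*}
where $\ZG = \{\mathcal S_1,\ldots,\mathcal S_N\}$, reducing everything to norm bounds for the maximal sparse operator \e{a25} attached to a family of sparse collections of balls.

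The weak-$L^1$ estimate \e{a31} now follows immediately from Corollary \ref{C4}. For \e{a30}, the naive path through \e{e:main2} would be lossy: for $p\in(1,2)$ it produces the exponent $1/(p-1)$ in place of the desired $1$. We instead interpolate two weak-type bounds. Since $\mathcal S=\cup_k\mathcal S_k$ consists of balls, $\MM_{\mathcal S}$ is pointwise dominated by the Hardy--Littlewood maximal function, which maps $L^q$ to $L^{q,\infty}$ for every $q\in(1,\infty)$ with a constant independent of $\mathcal S$. Inequality \e{e:main1} therefore gives, for each such $q$,
\begin{equation*}
\|\Lambda_{\ZG}\|_{L^q \to L^{q,\infty}} \lesssim \log_+ N.
\end{equation*}
Fixing $p\in(1,\infty)$, choosing any $q>p$, and invoking the Marcinkiewicz interpolation theorem together with the weak-$L^1$ bound \e{a32} yields a strong $L^p$ bound with factor $\log_+ N$, establishing \e{a30}. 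The only conceptual point is the choice to interpolate weak-to-weak rather than to cite \e{e:main2} directly; the routine step is the pointwise chain $|Tf_k|\lesssim \Lambda_{\mathcal S_k} f_k\le \Lambda_{\mathcal S_k} f$, for which only $|f_k|\le f$ is used.
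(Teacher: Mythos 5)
Your proposal is correct and follows essentially the same route as the paper: the pointwise chain $|Tf_k|\lesssim \Lambda_{\ZS_k}f_k\le\Lambda_{\ZS_k}f$ from Theorem~\ref{t:sparse}, then \eqref{a32} for the weak-$L^1$ bound and an interpolation of the weak-type estimate \eqref{e:main1} for the strong $L^p$ bound. The interpolation you carry out explicitly is exactly how the paper obtains \eqref{a24} (Corollary~\ref{C2}), which it then cites for \eqref{a30}; your version has the small advantage of working directly with balls rather than invoking \eqref{a24} as stated for canonical rectangles, and your observation that citing \eqref{e:main2} directly would be lossy for $1<p<2$ is accurate.
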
 
Indeed, applying \trm {t:sparse}, we get
\begin{equation*}
|Tf_k|\lesssim \Lambda _{\ZS_k} f_k\le \Lambda _{\ZS_k} f,
\end{equation*}
for some sparse collections of balls $\ZS_k$, and then the estimates in \cor {C5} can be deduced from \e {a24} and \e {a32} respectively.

\section{Proof of \trm {t:main}}
From \df {d1} it easily follows that any sparse operator, corresponding to a finite-martingale sparse collection of sets, can be dominated by a sum of bounded number of martingale sparse operators. So we can consider only martingale collections $\ZS_k$ in \trm {t:main}. 

The basic key to the proofs are the following properties of a sparse collection.
Let $\ZS$ be a martingale sparse collection. For $R\in \ZS$ denote by $\ZS_{j} (R)$ the $j$ generation of $R$. That is $\ZS_0(R)=\{R\}$ and inductively set $ \ZS _{j+1} (R)$ to be the maximal elements in 
\begin{equation}\label{a16}
\{ R'\in \ZS \;:\; R'\subset R\} \setminus \bigcup _{i=0}^{j} \ZS _{i} (R).  
\end{equation} 
Observe that for a fixed $j\ge 0$ the collection of sets
\begin{equation}
G_j(R)=\bigcup_{R'\in \ZS_{j}(R)}R', \quad R\in \ZS,
\end{equation}
is itself martingale sparse collection. Besides, from the definition of martingale sparse collection it follows that \begin{equation}\label{a17}
\mu(G_j(R))\le \gamma^j \mu(B). 
\end{equation}
This implies the exponential estimate  
\begin{equation}\label{e:exp}
\mu\Bigl\{ \sum_{\substack{S\in \mathcal S\\ S\subset R_0 }} \mathbf 1_{S} > \lambda  \Bigr\} \lesssim \lvert  R_0\rvert  \cdot  \gamma ^{\lambda}.  
\end{equation}
 
\begin{proof}[Proof of \e {e:main1}]
Take $ f \in L ^{p} (X)$, $p\ge 1$, of norm one. For a $\lambda>0$ and a small constant $ \delta>0$ we denote
\begin{align*}
&\ZS_{k,0}=\left\{R\in \ZS_k:\, \langle f \rangle_R > \frac{\delta\lambda}{\log N}  \right\},\\
&\ZS_{k,s}=\left\{R\in \ZS_k:\, \frac{\delta\lambda}{\log N}\cdot 2 ^{-s+1} \ge   \langle f \rangle _{R} >\frac{\delta\lambda}{\log N}\cdot 2 ^{-s} \right\},\, s=1,2,\ldots .
\end{align*}
Observe that for a fixed $k$ the families $\ZS_{k,s}$, $s=0,1,2,\ldots$, form a partition for 
the sparse collection $\ZS_k$. Besides, we have 
\begin{align}\label{a2}
\mu\left(\bigcup_{k=1}^N\bigcup_{R\in \ZS_{k,s}}R\right)&\le\mu\left\{\MM_{\ZS}f>\frac{\delta\lambda}{\log N}\cdot 2^{-s}  \right\}\\
&\le \left(\frac{\log N}{\delta\lambda}\right)^p\cdot 2^{sp}\cdot \lVert M _{\ZS} \rVert _{L^p\to L^{p,\infty}}^p.\\
\end{align}
Hence, using the definition of $\ZS_{k,s}$, we get
\begin{align} \label{a14}
E_\lambda  & = \{\Lambda_{\ZG}  f > \lambda \} = \bigcup _{k=1} ^N \{\Lambda _{\ZS_k} f > \lambda \} 
\\
& \subset \bigcup _{k=1} ^N \bigcup _{s\ge 0} 
\Bigl\{ \sum_{R\in \ZS _{k,s}} \langle f \rangle_R \mathbf 1_{R} > c 2 ^{-s/2} \lambda  \Bigr\} 
\\
& \subset 
\left(\bigcup _{k=1} ^N \bigcup_{R\in \ZS _{k,0}} R\right)\bigcup\left(\bigcup _{k=1} ^N \bigcup _{s\ge 1} 
\Bigl\{ \sum_{R\in \ZS _{k,s}} \langle f \rangle_R \mathbf 1_{R} > c 2 ^{-s/2} \lambda  \Bigr\}  \right)\\
& \subset 
\left(\bigcup _{k=1} ^N \bigcup_{R\in \ZS _{k,0}} R\right)\bigcup\left(\bigcup _{k=1} ^N \bigcup _{s\ge 1} 
\Bigl\{ \sum_{R\in \ZS _{k,s}}   \mathbf 1_{R} > c2 ^{s/2-1}\cdot   \frac{\log N}{\delta}  \Bigr\} \right), 
\end{align}
where $c>0$ is an absolute constant. From \e {a2} we deduce
\begin{equation}\label{a3}
\mu\left(\bigcup _{k=1} ^N \bigcup_{R\in \ZS _{k,0}} R\right)\lesssim \left(\frac{\log N}{\delta\lambda}\right)^p\lVert M _{\ZS} \rVert _{L^p\to L^{p,\infty}}^p.
\end{equation}
Applying exponential estimate \e {e:exp} and \e {a2} again, we see that 
\begin{align}\label{a4}
\mu\lvert 
\Bigl\{ \sum_{R\in \ZS _{k,s}}   \mathbf 1_{R} > c2 ^{s/2-1}\cdot   \frac{\log N}{\delta}  \Bigr\}
& \lesssim 
(\gamma ^{c/(2\delta)})^{ 2 ^{s/2}\log N } \mu\Bigl( \bigcup _{R\in \ZS _{k,s}} R \Bigr) 
\\
&\lesssim (\gamma ^{c/(2\delta)}) ^{2 ^{s/2} \log N }\cdot  2 ^{sp}\cdot \left(\frac{\log N}{\delta\lambda}\right)^p\lVert M _{\ZS} \rVert _{L^p\to L^{p,\infty}}^p \\
&\le \frac{1}{N}\cdot 2^{-s} \cdot \left(\frac{\log N}{\delta\lambda}\right)^p\lVert M _{\ZS} \rVert _{L^p\to L^{p,\infty}}^p,
\end{align}
where the last inequality is obtained by a small enough choice of $\delta$. From \e {a14}, \e {a3} and \e {a4} we immediately get
\begin{equation*}
\mu(E_\lambda)\lesssim \left(1+\sum _{k=1} ^N \sum _{s\ge 1}\frac{2^{-s}}{N} \right)\left(\frac{\log N}{\delta\lambda}\right)^p\lVert M _{\ZS} \rVert _{L^p\to L^{p,\infty}}^p\lesssim \left(\frac{\log N}{\lambda}\right)^p\lVert M _{\ZS} \rVert _{L^p\to L^{p,\infty}}^p,
\end{equation*}
that implies \e {e:main1}. 

\end{proof}

	To prove \e {e:main2} we will need a simple lemma below. Let $\ZS$ be a martingale sparse  collection with a constant $\gamma$. Attach to each $R\in \ZS$ a measurable set $G(R)\subset R$ such that $\mu(G(R))<\delta \mu(R)$, $0<\delta<1$ and suppose that $\ZS'=\{G(R):\,R\in \ZS\}$ is  itself a martingale sparse collection with the same constant $\gamma$. For $\alpha>0$ consider the sparse like operator
	\begin{equation}\label{a11}
	\Lambda_{\ZS,\ZS'}^\alpha  f(x)=\left(\sum_{R\subset \ZS}\langle f\rangle_{R}^\alpha\ZI_{G(B)}(x)\right)^{1/\alpha}.
	\end{equation}
Notice that in the case $\alpha=1$ and $G(R)=R$ it gives the ordinary sparse operator. The proof of the following lemma is based on a well-known argument. 
	\begin{lemma}\label{L1}
	The operator \e {a11} is bounded on $L^p(X)$ for $1<p<\infty$. Moreover, we have 	
	\begin{equation*}
	\|\Lambda_{S, S'}^\alpha \|_{L^p(X)\to L^p(X)}\le c\delta^{1/p}.
	\end{equation*}
where $c>0$ is a constant depended on $\alpha$ and on the constants from the above definitions.
\end{lemma}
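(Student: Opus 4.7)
The plan is to exploit $L^{p/\alpha}$-duality for the non-negative function $\textstyle\sum_R\langle f\rangle_R^\alpha\ZI_{G(R)}$ and reduce the resulting bilinear form to a Carleson embedding. The desired gain $\delta^{1/p}$ will propagate from the measure chain $\mu(E'_R)\le\mu(G(R))\le\delta\mu(R)\le(\delta/\gamma)\mu(E_R)$, where $E_R\subset R$ and $E'_R\subset G(R)$ are the pairwise disjoint portions supplied by sparseness of $\ZS$ and $\ZS'$ respectively.

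First I reduce to the case $\alpha<p$. When $\alpha\ge 1$, the elementary inequality $(\sum a_R^\alpha)^{1/\alpha}\le \sum a_R$ yields the pointwise domination $\Lambda^\alpha_{\ZS,\ZS'}f\le\textstyle\sum_R\langle f\rangle_R\ZI_{G(R)}$, so it is enough to handle $\alpha=1<p$. Set $q:=p/\alpha>1$; by $L^q$-duality pick $h\ge 0$ with $\|h\|_{q'}=1$ such that
\[\|\Lambda^\alpha_{\ZS,\ZS'}f\|_p^\alpha = \bigl\|\textstyle\sum_R\langle f\rangle_R^\alpha\ZI_{G(R)}\bigr\|_q = \sum_R\langle f\rangle_R^\alpha\int_{G(R)}h\,d\mu.\]

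Now the martingale sparseness of $\ZS'$ gives $\langle h\rangle_{G(R)}\le \MM_{\ZS'}h(x)$ for every $x\in E'_R$, hence $\int_{G(R)}h\le \gamma^{-1}\int_{E'_R}\MM_{\ZS'}h\,d\mu$. Using disjointness of $\{E'_R\}$ to push the sum inside,
\[\sum_R\langle f\rangle_R^\alpha\int_{G(R)}h \le \gamma^{-1}\int\Bigl(\textstyle\sum_R\langle f\rangle_R^\alpha\ZI_{E'_R}\Bigr)\MM_{\ZS'}h\,d\mu.\]
Hölder with exponents $(q,q')$ combined with the $L^{q'}$-boundedness of $\MM_{\ZS'}$ (Doob's inequality on the martingale collection $\ZS'$, applicable since $q'>1$) reduces the problem to estimating $\bigl\|\textstyle\sum_R\langle f\rangle_R^\alpha\ZI_{E'_R}\bigr\|_q$. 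By disjointness this equals $(\sum_R\langle f\rangle_R^p\mu(E'_R))^{1/q}$, and the measure chain above together with the standard Carleson embedding $\sum_R\langle f\rangle_R^p\mu(E_R)\le \int(\MM_\ZS f)^p\lesssim \|f\|_p^p$ (valid for $p>1$) produces a bound of order $\delta^{\alpha/p}\|f\|_p^\alpha$. Taking $\alpha$-th roots yields the claim.

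The main hinge is the choice $q=p/\alpha$: this makes both $q$ and $q'$ strictly larger than $1$, so that the maximal inequality for $\MM_{\ZS'}$ and the Carleson embedding for $\ZS$ both apply, and it is precisely what motivates the power-mean reduction to $\alpha<p$ at the outset. Beyond that, the argument is a routine sparse-operator computation, and the $\delta^{1/p}$ gain propagates through the measure estimate $\mu(E'_R)\le(\delta/\gamma)\mu(E_R)$ without further work.
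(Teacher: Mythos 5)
Your argument is correct and follows essentially the same route as the paper's: dualize at exponent $p/\alpha$, apply H\"older, and exploit the measure chain $\mu(G(R))\le\delta\mu(R)\le(\delta/\gamma)\mu(E_R)$ together with the boundedness of $\MM_{\ZS}$ on $L^p$ and of $\MM_{\ZS'}$ on $L^{p/(p-\alpha)}$. The differences are cosmetic: you pass to the disjoint sets $E'_R$ and an integral H\"older step rather than the paper's discrete H\"older on the sum, and you add a preliminary reduction to $\alpha<p$ that the paper leaves implicit (harmless, since $\alpha\le 1<p$ in the application).
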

\begin{proof}For $R\in \ZS$ we have 
	\begin{equation}\label{a15}
	\mu(G(R))\le \delta\mu(R)\le \delta\cdot \gamma^{-1}\mu(E_R),\quad \mu(G(R))\le \gamma^{-1}\mu(E_{G(R)}),
	\end{equation}
where $E_R$ and $E_{G(R)}$ denote the disjoint portions of the members of $\ZS$ and $\ZS'$ respectively.  
Suppose $\|f\|_p=1$. For some positive function $g\in L^{p/(p-\alpha)}(X)$ of norm one, we have 
	\begin{align}\label{a10}
	\|\Lambda_{\ZS,\ZS'}^\alpha (f)\|_p^\alpha&=\bigg\|\sum_{R\in \ZS_k}\langle f \rangle_{R}^\alpha\ZI_{G(R)}\bigg\|_{p/\alpha}\\
	&=\left\langle \sum_{R\in \ZS_k}\langle f \rangle_{R}^\alpha \ZI_{G(R)},g\right\rangle\\
	&=\sum_{R\in \ZS_k}\langle f \rangle_{R}^\alpha\langle g\rangle_{G(R)}\mu(G(R))\\
	&=\sum_{R\in \ZS_k}\langle f \rangle_{R}^\alpha\bigg(\mu(G(R))\bigg)^{\alpha /p}\cdot \langle g\rangle_{G(R)}\bigg(\mu(G(R))\bigg)^{(p-\alpha)/p}\\
	&\le \left(\sum_{R\in \ZS_k} \langle f \rangle_{R}^p\cdot \mu(G(R))\right)^{\alpha/p}
	\left(\sum_{R\in \ZS_k} \langle g \rangle_{G(R)}^{p/(p-\alpha)}\cdot \mu(G(R))\right)^{(p-\alpha)/p}\\
	&\le \gamma^{-1}\delta^{\alpha/p}\left(\sum_{R\in \ZS_k} \langle f \rangle_{R}^p\mu(E_R))\right)^{\alpha/p}\left(\sum_{R\in \ZS_k} \langle g \rangle_{G(R)}^{p/(p-\alpha)}\mu(E_{G(R)})\right)^{(p-\alpha)/p}\\
	&\le \gamma^{-1}\delta^{\alpha/p}\|\MM_\ZS(f)\|_p^{\alpha}\|\MM_{\ZS'}(g)\|_{p/(p-\alpha)}\\
	&\lesssim \gamma^{-1} \delta^{\alpha/p}\|f\|_p^\alpha\|g\|_{p/(p-\alpha)}=\gamma^{-1} \delta^{\alpha/p}. 
	\end{align}
In the last inequality we use the boundedness of maximal functions $\MM_\ZS$ and $\MM_{\ZS'}$ corresponding to the martingale sparse collections $\ZS$ and $\ZS'$.
\end{proof}
\begin{proof}[Proof of \e {e:main2}]
Let $ E_k$, $k=1,2,\ldots,N$ be a measurable partition of $X$.  Linearizing the supremum in the definition of $\Lambda $, we can redefine  
\begin{equation*}
\Lambda_{\ZG} f (x) = \sum_{k=1}^N \sum_{R\in \ZS_k} \langle f \rangle _{R} \mathbf 1_{E_{k,R}}(x), 
\qquad E_{k,R} = E_k\cap R . 
\end{equation*} 
Denote $\alpha=\min\{1,p-1\}\le 1$. Let $\ZS _{k, j} (R)$ be the $j$ generation of $ R\in \ZS_k$ (see the definition in \e {a16}). For a function $f\in L^p(X)$ of norm one we denote 
\begin{align}
A_j^{(1)}& =\int_X\sum_{k=1}^N \sum_{R\in \ZS_k} \langle f \rangle_R\ZI_{E_{k,R}}\left(\sum_{k=1}^N \sum_{R'\in \ZS_{k,j}(R)} \langle f \rangle_{R'}\ZI_{E_{k,R'}}\right)^\alpha\left(\Lambda_\ZG f\right)^{p-\alpha-1},\\
A_j^{(2)}&=\int_X\sum_{k=1}^N \sum_{R\in \ZS_k} \langle f \rangle_R\ZI_{E_{k,R}}\left(\sum_{k=1}^N \sum_{R':\,R\in \ZS_{k,j}(R')} \langle f \rangle_{R'}\ZI_{E_{k,R'}}\right)^\alpha\left(\Lambda_\ZG f\right)^{p-\alpha-1}.
\end{align} 
Then,  using the inequality $(\sum_kx_k)^\alpha\le \sum_kx_k^\alpha$, we get
\begin{align} \label{a18}
\|\Lambda_\ZG f\|_p^p&=\int_X\sum_{k=1}^N \sum_{R\in \ZS_k} \langle f \rangle_R\ZI_{E_{k,R}}\left(\sum_{k=1}^N \sum_{R\in \ZS_k} \langle f \rangle_R\ZI_{E_{k,R}}\right)^\alpha\cdot\left(\Lambda_\ZG f\right)^{p-\alpha-1}\\
&\le \sum_{j=0}^\infty (A_j^{(1)}+A_j^{(2)}).
\end{align}
Since $\langle f \rangle_{R'}\ZI_{E_{k,R'}}\le \MM_\ZS(f)$, for any $i=1,2$ it holds the inequality 
\begin{align}\label{a8}
A_j^{(i)}&\le \int_X(\MM_\ZS(f))^\alpha\left(\sum_{k=1}^N \sum_{R\in \ZS_k} \langle f \rangle_{R}\ZI_{E_{k,R}}\right)\cdot\left(\Lambda_\ZG f\right)^{p-\alpha-1}\\
&=\int_X(\MM_\ZS(f))^\alpha \cdot\left(\Lambda_\ZG  f\right)^{p-\alpha}\\
&\le \|\MM_\ZS\|_p^\alpha \|\Lambda_\ZG  \|_p^{p-\alpha}.
\end{align}
For a fixed $j$ and $R\in \ZS_k$ denote $G_j(R)=\cup_{R'\in \ZS_{k,j}(R)}R'$.
Observe that the family $\ZS_{k,j}=\{G_j(R):\, R\in S_k\}$ forms a martingale-system and \e {a17} implies $\mu(G_j(R))\le \gamma^{j}\mu(R)$. So the family $\ZS_{k}$ together with $\ZS_{k,j}$ satisfies the conditions of \lem {L1}. Thus we have
\begin{align}\label{a19}
A_j^{(1)}&\le \sum_{k=1}^N \int_X\sum_{R\in \ZS_k}\langle f \rangle_{R}\ZI_{G(R)} \cdot (\Lambda_\ZG  f)^\alpha\cdot \left(\Lambda_\ZG  f\right)^{p-\alpha-1}\\
&= \sum_{k=1}^N \int_X\sum_{R\in \ZS_k}\langle f \rangle_{R}\ZI_{G(R)} \left(\Lambda_\ZG  f\right)^{p-1}\\
&\le \left\|\Lambda_\ZG  \right\|_{L^p\to L^p}^{p-1}\sum_{k=1}^N\left\|\Lambda_{\ZS_{k},\ZS_{k,j}}^{1}(f)\right\|_{p}\\
&\le 2^{-cj} N\|\Lambda_\ZG  \|_{L^p\to L^p}^{p-1}\\
&\le 2^{-cj} N\|\Lambda_\ZG  \|_{L^p\to L^p}^{p-\alpha},
\end{align}
where the last inequality follows from  $\|\Lambda_\ZG  \|_{L^p\to L^p}\ge 1$. Likewise, again applying $(\sum_kx_k)^\alpha\le \sum_kx_k^\alpha$, we get
\begin{align}\label{a20}
A_j^{(2)}&\le \int_X\sum_{k=1}^N \sum_{R\in \ZS_k} \langle f \rangle_R\ZI_{E_{k,R}}\left(\sum_{k=1}^N \sum_{R':\,R\in \ZS_{k,j}(R')} \langle f \rangle_{R'}^\alpha\ZI_{E_{k,R'}}\right)\left(\Lambda_\ZG  f\right)^{p-\alpha-1}\\
&= \sum_{k=1}^N \int_X\sum_{R'\in \ZS_k} \langle f \rangle_{R'}^\alpha\ZI_{E_{k,R'}}\left(\sum_{R\in \ZS_{k,j}(R')} \langle f \rangle_{R}\ZI_{E_{k,R}}\right)\left(\Lambda_\ZG  f\right)^{p-\alpha-1}\\
&\le \sum_{k=1}^N \int_X\sum_{R'\in \ZS_k}\langle f \rangle_{R'}^\alpha\ZI_{G(R')} \cdot \Lambda_\ZG  f\cdot \left(\Lambda_\ZG  f\right)^{p-\alpha-1}\\
&= \sum_{k=1}^N \int_X(\Lambda_{\ZS_k,\ZS'_k}^\alpha(f))^\alpha \left(\Lambda_\ZG  f\right)^{p-\alpha}\\
&\le \left\|\Lambda_\ZG  \right\|_{L^p\to L^p}^{p-\alpha}\sum_{k=1}^N\left\|\Lambda_{\ZS_k,\ZS_{k,j}}^\alpha(f)\right\|_{p}^\alpha\\
&\le 2^{-cj} N\|\Lambda_\ZG  \|_{L^p\to L^p}^{p-\alpha}.
\end{align}
 Combining \eqref{a18}, \e {a8}, \e {a19} and \e {a20},  we will get	 
\begin{equation*}
\lVert \Lambda_\ZG  \rVert _{L^p\to L^p }^\alpha 
\lesssim \sum_{j=0} ^{\infty } \min \{  \lVert M _{\ZS} \rVert _{L^p\to L^p}^\alpha,N 2 ^{-cj}   \}. 
\end{equation*}
Since $\lVert M _{\ZS} \rVert _{L^p\to L^p}\ge 1$, for an appropriate choice of a constant $c'>0$ we obtain
\begin{align*}
\lVert \Lambda_\ZG  \rVert _{L^p\to L^p }^\alpha &\le  c'\log N \lVert M _{\ZS} \rVert _{L^p\to L^p}^\alpha+\sum_{j=c'\log N}^\infty
N 2 ^{-cj} \\
&\le c'\log N \lVert M _{\ZS} \rVert _{L^p\to L^p}^\alpha+1\\
&\lesssim \log N \lVert M _{\ZS} \rVert _{L^p\to L^p}^\alpha.
\end{align*}
Taking into account the definition of $\alpha$, this completes the proof of \eqref{e:main2}.

\end{proof}

\section{Extensions} 

The logarithmic gains in the main theorem are sharp, in general. Indeed, it is enough to show the optimality of logarithm in \e {a24}. The function  $ f$ is taken to be identically one on a large cube $Q\subset \ZR^n$. 
For each $k=1,2,\ldots, N$, it is very easy to construct a sparse operator $\Lambda_{\ZS_k}$ based on a sparse collection of cubes $\ZS_k$ so that 
$ \{ \Lambda_{\ZS_k} f > c \log N \}  $ will have measure at least $ \lvert  Q\rvert/N  $.  These sets can be made to be essentially statistically independent, so that one sees that the logarithmic bound is sharp in \eqref{a24}. A careful examination of the same argument can show also the sharpness of the estimates \e {a30} and \e {a31}, in general.

The papers \cites{MR3145928,MR2680067} prove a variety of results for $ T_V$ defined as a maximum of a fixed Hormander-Mihklin multiplier computed in directions $ v\in V$.  Their estimates are slightly better than ours in \cor {c:H}.  This raises two questions:

\begin{question}
	First, if one fixes the specific sparse operator computed in every direction, can bounds be proved that match those of say \cite{MR3145928}?   
\end{question}

 This paper \cite{MR3145928} proves results for the maximal truncations of the Hilbert transform computed in different directions. Again, their bounds are better than ours.

\begin{question}
	Can one formulate a maximal sparse operator which is less general than ours,
but still general enough to capture these results for maximal truncations of the Hilbert transform?  
\end{question}

Recent papers \cites{MR3047650,MR3432267} have established variants of these results in higher dimensions. 
Other papers \cites{2017arXiv170607111D,2017arXiv170402918D} consider certain Lipschitz versions.  
It would be interesting to study the analogous questions for both themes.

\bibliographystyle{plain}

\begin{bibdiv}
\begin{biblist}

\bib{MR3531367}{article}{
      author={Bernicot, Fr{\'e}d{\'e}ric},
      author={Frey, Dorothee},
      author={Petermichl, Stefanie},
       title={Sharp weighted norm estimates beyond {C}alder\'on-{Z}ygmund
  theory},
        date={2016},
        ISSN={2157-5045},
     journal={Anal. PDE},
      volume={9},
      number={5},
       pages={1079\ndash 1113},
  url={http://dx.doi.org.prx.library.gatech.edu/10.2140/apde.2016.9.1079},
      review={\MR{3531367}},
}

\bib{2016arXiv161209201C}{article}{
      author={{Conde-Alonso}, J.~M.},
      author={{Culiuc}, A.},
      author={{Di Plinio}, F.},
      author={{Ou}, Y.},
       title={{A sparse domination principle for rough singular integrals}},
        date={2016-12},
     journal={ArXiv e-prints},
      eprint={1612.09201},
}

\bib{MR3521084}{article}{
      author={Conde-Alonso, Jos{\'e}~M.},
      author={Rey, Guillermo},
       title={A pointwise estimate for positive dyadic shifts and some
  applications},
        date={2016},
        ISSN={0025-5831},
     journal={Math. Ann.},
      volume={365},
      number={3-4},
       pages={1111\ndash 1135},
  url={http://dx.doi.org.prx.library.gatech.edu/10.1007/s00208-015-1320-y},
      review={\MR{3521084}},
}

\bib{160305317}{article}{
      author={{Culiuc}, A.},
      author={{Di Plinio}, F.},
      author={{Ou}, Y.},
       title={{Domination of multilinear singular integrals by positive sparse
  forms}},
        date={2016-03},
     journal={ArXiv e-prints},
      eprint={1603.05317},
}

\bib{160405506}{article}{
      author={{de Fran{\c c}a Silva}, F.~C.},
      author={{Zorin-Kranich}, P.},
       title={{Sparse domination of sharp variational truncations}},
        date={2016-04},
     journal={ArXiv e-prints},
      eprint={1604.05506},
}
\bib{MR2680067}{article}{
	author={Demeter, Ciprian},
	title={Singular integrals along $N$ directions in $\Bbb R^2$},
	journal={Proc. Amer. Math. Soc.},
	volume={138},
	date={2010},
	number={12},
	pages={4433--4442},
	issn={0002-9939},
	review={\MR{2680067}},
}

\bib{MR3145928}{article}{
      author={Demeter, Ciprian},
      author={Di~Plinio, Francesco},
       title={Logarithmic {$L^p$} bounds for maximal directional singular
  integrals in the plane},
        date={2014},
        ISSN={1050-6926},
     journal={J. Geom. Anal.},
      volume={24},
      number={1},
       pages={375\ndash 416},
  url={https://doi-org.prx.library.gatech.edu/10.1007/s12220-012-9340-2},
      review={\MR{3145928}},
}

\bib{2017arXiv170607111D}{article}{
      author={{Di Plinio}, F.},
      author={{Guo}, S.},
      author={{Thiele}, C.},
      author={{Zorin-Kranich}, P.},
       title={{Square functions for bi-Lipschitz maps and directional
  operators}},
        date={2017-06},
     journal={ArXiv e-prints},
      eprint={1706.07111},
}

\bib{2017arXiv170402918D}{article}{
      author={{Di Plinio}, F.},
      author={{Parissis}, I.},
       title={{A sharp estimate for the Hilbert transform along finite order
  lacunary sets of directions}},
        date={2017-04},
     journal={ArXiv e-prints},
      eprint={1704.02918},
}

\bib{MR2322743}{article}{
      author={Karagulyan, Grigori A.},
       title={On unboundedness of maximal operators for directional {H}ilbert
  transforms},
        date={2007},
        ISSN={0002-9939},
     journal={Proc. Amer. Math. Soc.},
      volume={135},
      number={10},
       pages={3133\ndash 3141},
  url={https://doi-org.prx.library.gatech.edu/10.1090/S0002-9939-07-08731-X},
      review={\MR{2322743}},
}

\bib{161103808}{article}{
      author={Karagulyan, Grigori A.},
       title={{An abstract theory of singular operators}},
        date={2016-11},
     journal={ArXiv e-prints},
      eprint={1611.03808},
}

\bib{MR1681088}{article}{
      author={Katz, Nets~Hawk},
       title={Maximal operators over arbitrary sets of directions},
        date={1999},
        ISSN={0012-7094},
     journal={Duke Math. J.},
      volume={97},
      number={1},
       pages={67\ndash 79},
  url={https://doi-org.prx.library.gatech.edu/10.1215/S0012-7094-99-09702-8},
      review={\MR{1681088}},
}

\bib{MR1711029}{article}{
      author={Katz, Nets~Hawk},
       title={Remarks on maximal operators over arbitrary sets of directions},
        date={1999},
        ISSN={0024-6093},
     journal={Bull. London Math. Soc.},
      volume={31},
      number={6},
       pages={700\ndash 710},
  url={https://doi-org.prx.library.gatech.edu/10.1112/S0024609399005949},
      review={\MR{1711029}},
}

\bib{MR3625108}{article}{
      author={Lacey, Michael~T.},
       title={An elementary proof of the {$A_2$} bound},
        date={2017},
        ISSN={0021-2172},
     journal={Israel J. Math.},
      volume={217},
      number={1},
       pages={181\ndash 195},
  url={http://dx.doi.org.prx.library.gatech.edu/10.1007/s11856-017-1442-x},
      review={\MR{3625108}},
}

\bib{150805639}{article}{
      author={{Lerner}, A.~K.},
       title={{Intuitive dyadic calculus: the basics}},
        date={2015-08},
     journal={ArXiv e-prints},
      eprint={1508.05639},
}

\bib{MR0466470}{article}{
      author={Nagel, A.},
      author={Stein, E.~M.},
      author={Wainger, S.},
       title={Differentiation in lacunary directions},
        date={1978},
        ISSN={0027-8424},
     journal={Proc. Nat. Acad. Sci. U.S.A.},
      volume={75},
      number={3},
       pages={1060\ndash 1062},
      review={\MR{0466470}},
}

\bib{MR3047650}{article}{
      author={Parcet, Javier},
      author={Rogers, Keith~M.},
       title={Differentiation of integrals in higher dimensions},
        date={2013},
        ISSN={1091-6490},
     journal={Proc. Natl. Acad. Sci. USA},
      volume={110},
      number={13},
       pages={4941\ndash 4944},
         url={https://doi-org.prx.library.gatech.edu/10.1073/pnas.1218928110},
      review={\MR{3047650}},
}

\bib{MR3432267}{article}{
      author={Parcet, Javier},
      author={Rogers, Keith~M.},
       title={Directional maximal operators and lacunarity in higher
  dimensions},
        date={2015},
        ISSN={0002-9327},
     journal={Amer. J. Math.},
      volume={137},
      number={6},
       pages={1535\ndash 1557},
         url={https://doi-org.prx.library.gatech.edu/10.1353/ajm.2015.0038},
      review={\MR{3432267}},
}

\end{biblist}
\end{bibdiv}

\end{document}